\documentclass[12pt]{article}
\textheight 225mm \textwidth 168mm \topmargin -1.0cm
\oddsidemargin 2cm \evensidemargin 2cm \hoffset=-3cm

\usepackage{amsthm}
\usepackage{amsmath}
\usepackage{enumerate}
\usepackage{amssymb}
\usepackage{latexsym}
\usepackage{amsfonts}
\usepackage{color}
\usepackage{mathrsfs}
\usepackage{epsfig}
\newtheorem{theorem}{\bf Theorem}[section]

\newtheorem{definition}[theorem]{\bf Definition}

\newtheorem{remark}[theorem]{\bf Remark}
\newtheorem{lemma}[theorem]{\bf Lemma}

\newsavebox{\savepar}

\pagestyle{myheadings}
\begin{document}
	\title{Existence of infinitely many solutions for a nonlocal elliptic PDE involving singularity}
	\author{Sekhar Ghosh \&  Debajyoti Choudhuri~\footnote{Corresponding
			author: dc.iit12@gmail.com} \\
		\small{Department of Mathematics, National Institute of Technology Rourkela}\\
		\small{Emails: sekharghosh1234@gmail.com \& dc.iit12@gmail.com}}
	\date{}
	\maketitle
		\begin{abstract}
	\noindent In this article, we will prove the existence of infinitely many positive weak solutions to the following nonlocal elliptic PDE.
	\begin{align}
	(-\Delta)^s u&= \frac{\lambda}{u^{\gamma}}+ f(x,u)~\text{in}~\Omega,\nonumber\\
	u&=0~\text{in}~\mathbb{R}^N\setminus\Omega,\nonumber
	\end{align}
	where $\Omega$ is an open bounded domain in $\mathbb{R}^N$ with Lipschitz boundary, $N>2s$, $s\in (0,1)$, $\gamma\in (0,1)$. We will employ variational techniques to show the existence of infinitely many weak solutions of the above problem.\\
	{\bf keywords}: Elliptic PDE, Genus, PS condition, Mountain Pass Theorem.\\
	{\bf AMS classification}:~35J20, 35J35, 35J60, 35J75.
\end{abstract}

\section{Introduction}
	We consider the following nonlocal problem involving singularity.
		\begin{align}\label{main1}
		\begin{split}
			(-\Delta)^s u&= \frac{\lambda}{u^{\gamma}}+ f(x,u)~\text{in}~\Omega,\\
		u&=0~\text{in}~\mathbb{R}^N\setminus\Omega,\\
		u&>0~\text{in}~\Omega,
		\end{split}
	\end{align}
	where, 
	$$(-\Delta)^s u(x)=-\frac{1}{2}\int_{\mathbb{R}^N}\frac{(u(x+y)+u(x-y)-2u(x))}{|x-y|^{N+2s}}dy, ~\text{for}~x\in\mathbb{R}^N,$$ $\lambda>0$, $s\in(0,1)$, $\gamma\in(0,1)$ and $\Omega$ be an open, bounded subset of $\mathbb{R}^N$, $N\geq 2$.
	
	\noindent The study of nonlocal elliptic PDEs is important to both, from the mathematical research as well as from the real world application, point of views. Some of the applications are in the probability theory, the obstacle problem, optimization, finance, phase transitions, soft thin films, conservation laws, minimal surfaces, material science and water waves. The application in probability theory, in particular to the  L\'{e}vy process, can be found in \cite{bertoin1996levy} and that in the field of finance, one can refer to \cite{cont2004financial}. For a fruitful note on the application of fractional Laplacian one may refer \cite{valdinoci2009long} and the references therein.
\noindent Recently, the study of singular elliptic PDE has drawn a great attention to many researchers. One of the earliest study on the existence and regularity of a weak solution was made by Lazer and Mckenna \cite{lazer1991singular} to the following problem. 
\begin{eqnarray}\label{lazer}
-\Delta u&=& \frac{p(x)}{u^\gamma}~\text{in}~\Omega,\nonumber\\
u&=&0~\text{in}~\partial\Omega,\\
u&>& 0~\text{in}~\Omega\nonumber,
\end{eqnarray}
where, $p:\Omega\rightarrow\mathbb{R}$ is a nonnegative bounded function. In \cite{lazer1991singular}, the authors proved that for a $C^{2+\gamma}$ boundary, the problem \eqref{lazer} has a $H_0^1(\Omega)$ solution iff $\gamma<3$ and if $\gamma>1$, the problem cannot have $C^1(\bar\Omega)$ solution. The following problem have been studied for existence, uniqueness and regularity of solutions for $p=2$ in \cite{fang2014existence} and for  $1<p<\infty$ in \cite{canino2017nonlocal}, where $0<s<1$.
\begin{eqnarray}\label{fang, canino nonlocal}
(-\Delta_p)^s u&=& \lambda\frac{ a(x)}{u^\gamma}~\text{in}~\Omega,\nonumber\\
u&=&0~\text{in}~\mathbb{R}^N\setminus\Omega,\\
u&>& 0~\text{in}~\Omega\nonumber,
\end{eqnarray}
where, $a:\Omega\rightarrow\mathbb{R}$ is a nonnegative bounded function. The author in \cite{fang2014existence}, guaranteed the existence of a unique $C^{2,\alpha}(\Omega),\,(0<\alpha<1)$ solution for $\lambda a(x)\equiv 1$. Canino et al. \cite{canino2017nonlocal}, had proved the existence and uniqueness of solution to the problem \eqref{fang, canino nonlocal} by dividing $\lambda$ into three cases $0<\lambda<1$, $\lambda=1$ and $\lambda>1$. A few more noteworthy study involving singularity for both Laplacian and fractional Laplacian operators can be found in \cite{boccardo2010semilinear, crandall1977dirichlet, ghosh2018singular, rosen1971minimum} and the references therein.\\ 
Multiplicity of solutions to the following type of problem has been widely studied by many authors, a few of them are in \cite{ghanmi2016nehari, ghosh2018multiplicity, mukherjee2016dirichlet,  saoudi2017critical} and the references therein.
\begin{eqnarray}\label{refer nonlocal}
(-\Delta_p)^s u&=& \lambda\frac{ a(x)}{u^\gamma}+f(x,u)~\text{in}~\Omega,\nonumber\\
u&=&0~\text{in}~\mathbb{R}^N\setminus\Omega,\\
u&>& 0~\text{in}~\Omega\nonumber.
\end{eqnarray}
Here $N>ps$, $M\geq0$, $a:\Omega\rightarrow\mathbb{R}$ is a nonnegative bounded function. The authors in \cite{ghosh2018multiplicity, saoudi2017critical}, have used a variational technique to guarantee the existence of multiple solutions. Nehari manifold method has been used to prove the multiplicity result in \cite{ghanmi2016nehari, mukherjee2016dirichlet}. In most of these studies, the authors obtained two distinct weak solutions.\\
The existence results of infinitely many solutions to both Laplacian and fractional Laplacian with a nonsingular, nonlinear data have been studied widely with Dirichlet boundary condition. In most of these studies the authors proved the existence result with the help of the symmetric Mountain Pass Theorem \cite{heinz1987free, kajikiya2005critical}. One of the earliest attempt to show the existence of infinitely many solutions was made by Ambrosetti and Rabinowitz \cite{ambrosetti1973dual} to the following problem.
\begin{align}\label{problem ambro}
\begin{split}
-\Delta u&= f(x,u)~\text{in}~\Omega,\\
u&=0~\text{in}~\partial\Omega,\\
u&>0~\text{in}~\Omega,
\end{split}
\end{align}
where, $f$ is superlinear but subcritical near infinity. The authors in [19] have used the symmetric Mountain Pass Theorem to guarantee the existence. A few more similar type of studies made are in \cite{kajikiya2005critical, liu2015clark, wang2001nonlinear} and the references therein.\\
Recently, Gu et al. \cite{gu2018motivation} has guaranteed the existence of infinitely many solutions to a nonlocal problem of the following type with a sublinear growth of $f$.
	\begin{align}\label{problem lambda =0}
\begin{split}
(-\Delta)^s u&= f(x,u)~\text{in}~\Omega,\\
u&=0~\text{in}~\mathbb{R}^N\setminus\Omega,\\
u&>0~\text{in}~\Omega.
\end{split}
\end{align}
 For further details to the problem \eqref{problem lambda =0}, we refer the readers to \cite{binlin2015superlinear, bisci2016nontrivial, dipierro2017fractional, servadei2013variational, servadei2015brezis} and the references therein.\\
  In the literature the study to obtain infinitely many solutions for the problems of the type \eqref{problem ambro}, \eqref{problem lambda =0}, the authors have considered either a sublinear or a superlinear growth on $f$. To our knowledge, the study of the problem \eqref{main1} is very new to the literature due to the presence of a singular term $u^{-\gamma}$. Motivated from \cite{gu2018motivation}, we will prove the existence of infinitely many weak solutions to the problem \eqref{main1}. We assume the following growth conditions on $f$.
  \begin{itemize}
  	\item[(A1)] $f\in C(\Omega\times\mathbb{R}, \mathbb{R})$ and there exists a $\delta>0$ such that $\forall\,x\in\Omega$ and $|t|\leq\delta,$  $f(x,-t)=-f(x,t).$
  	\item[(A2)] $\lim\limits_{t\rightarrow0}\frac{f(x,t)}{t}=+\infty$ uniformly on $\Omega.$
  	\item[(A3)] There exists $r>0$ and $\alpha\in(1-\gamma, 2)$ such that $\forall,\,x\in\Omega$ and $|t|\leq r$, $tf(x,t)\leq\alpha F(x,t)$, where $F(x,t)=\int_{0}^{t}f(x,\tau) d\tau.$
  \end{itemize}
  Prior to stating the main theorem, we will define the necessary function spaces and the associated notations. Consider the space $(X, \|.\|_X)$ which is defined as
  \begin{eqnarray}
  X&=&\left\{u:\mathbb{R}^N\rightarrow\mathbb{R}~\text{is measurable}, u|_{\Omega}\in L^2(\Omega) ~\text{and}~\frac{|u(x)-u(y)|}{|x-y|^{\frac{N+2s}{2}}}\in L^{2}(Q)\right\}\nonumber
  \end{eqnarray}
  equipped with the Gagliardo norm 
  \begin{eqnarray}
  \|u\|_X&=&\|u\|_{2}+\left(\int_{Q}\frac{|u(x)-u(y)|^2}{|x-y|^{N+2s}}dxdy\right)^{\frac{1}{2}}.\nonumber
  \end{eqnarray}
  where $\Omega\subset\mathbb{R}^N$, $Q=\mathbb{R}^{2N}\setminus((\mathbb{R}^N\setminus\Omega)\times(\mathbb{R}^N\setminus\Omega))$. Here, $\|u\|_{2}$ refers to the $L^2$-norm of $u$. A frequently used space in this article will be the subspace $X_0$ of $X$ defined as
  \begin{eqnarray}
  X_0&=&\left\{u\in X: u=0 ~\text{a.e. in}~ \mathbb{R}^N\setminus\Omega\right\}\nonumber
  \end{eqnarray}
  equiped with the norm
  \begin{eqnarray}
  \|u\|&=&\left(\int_{Q}\frac{|u(x)-u(y)|^2}{|x-y|^{N+2s}}dxdy\right)^{\frac{1}{2}}.\nonumber
  \end{eqnarray}
  The space $(X_0, \|.\|)$ is a Hilbert space \cite{servadei2012mountain}. The best Sobolev constant is defined as 
  \begin{equation}\label{sobolev const}
  S=\underset{u\in X_0\setminus\{0\}}{\inf}\cfrac{\int_{Q}\frac{|u(x)-u(y)|^2}{|x-y|^{N+2s}}dxdy}{\left(\int_\Omega|2_s^*|dx\right)^{\frac{2}{2_s^*}}}
  \end{equation}

\noindent We now define a weak solution to the problem \eqref{main1}.	
\begin{definition}\label{weak main}
	A function $u\in X_0$ is a weak solution to the problem \eqref{main1}, if $u>0,$ $\phi u^{-\gamma}\in L^1(\Omega)$ and
	\begin{equation}
	\int_{Q}\frac{(u(x)-u(y))(\phi(x)-\phi(y))}{|x-y|^{N+2s}}dxdy-\int_{\Omega}\left(\frac{\lambda}{u^{\gamma}}+f(x,u)
	\right)\phi dx=0,~\forall\,\phi\in X_0.
	\end{equation}
\end{definition}

\noindent The energy functional $I:X_0\rightarrow(-\infty, \infty]$ associated with the problem \eqref{main1} is defined as
\begin{equation}\label{energy main}
I(u)=\frac{1}{2}\int_{Q}\frac{|u(x)-u(y)|^2}{|x-y|^{N+2s}}dxdy-\frac{\lambda}{1-\gamma}\int_{\Omega}u^{1-\gamma}dx-\int_{\Omega}F(x, u)dx
\end{equation}

\noindent We will now state our main result
		\begin{theorem}\label{main thm}
		Let the assumptions $(A1)-(A3)$ hold, then there exists $\Lambda<\infty$ and for every $\lambda\in(0, \Lambda)$, the problem \eqref{main1} has a sequence of nonnegative weak solutions $\{u_n\}\subset X_0\cap L^{\infty}(\Omega)$ such that $I(u_n)<0,$ $I(u_n)\rightarrow0^-$ and $u_n\rightarrow0$ in $X_0.$
	\end{theorem}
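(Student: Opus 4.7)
The plan is to apply a symmetric critical point theorem based on the Krasnoselskii genus, in the spirit of Clark/Kajikiya \cite{kajikiya2005critical} and following the strategy of Gu et al.\ \cite{gu2018motivation}, to a suitably modified version of $I$. Since $I$ is neither $C^{1}$ on $X_0$ nor defined for sign-changing functions because of the $u^{-\gamma}$ term, and since the hypotheses $(A1)$--$(A3)$ only control $f$ in a neighbourhood of $0$, the first task is to construct an auxiliary functional $\widetilde I$ which is even, well-defined on all of $X_0$, bounded below, and which coincides with $I$ on small $L^{\infty}$ balls. Concretely I would replace $u^{1-\gamma}$ by $|u|^{1-\gamma}$ and replace $f(x,t)$ by an odd truncation $\tilde f(x,t)$ that equals $f(x,t)$ for $|t|\le\min\{\delta,r\}$ and for which $t\tilde f(x,t)\le\alpha\widetilde F(x,t)$ with $\widetilde F$ subquadratic globally, producing
\[
\widetilde I(u)=\tfrac12\|u\|^{2}-\tfrac{\lambda}{1-\gamma}\int_\Omega|u|^{1-\gamma}\,dx-\int_\Omega \widetilde F(x,u)\,dx,
\]
which is even and continuous on $X_0$.

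Next I would verify Kajikiya's hypotheses for $\widetilde I$. The fractional embeddings $X_0\hookrightarrow L^{q}(\Omega)$, the subquadratic growth of $\widetilde F$, and the fact that $1-\gamma<2$ together give coercivity and a uniform lower bound for $\widetilde I$; the compact embedding for $q<2^{*}_{s}$ yields the Palais--Smale condition at every negative level. For the genus condition, fix any $n$-dimensional subspace $E_{n}\subset X_0\cap L^{\infty}(\Omega)$; equivalence of norms on $E_{n}$ gives, for $u\in E_{n}$ with $\|u\|=\rho$, an estimate of the form
\[
\widetilde I(u)\le \tfrac12\rho^{2}-c_{1}\lambda\,\rho^{1-\gamma}+c_{2}\rho^{\alpha},
\]
and since $1-\gamma<\alpha<2$, the singular term dominates as $\rho\downarrow 0$, so $\sup_{E_{n}\cap\{\|u\|=\rho_{n}\}}\widetilde I<0$ for a suitable small $\rho_{n}$. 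This produces the symmetric set of genus $\ge n$ required by Kajikiya's theorem.

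Invoking that theorem then delivers a sequence of critical points $\{u_{n}\}\subset X_0$ of $\widetilde I$ with $\widetilde I(u_{n})<0$, $\widetilde I(u_{n})\to 0^{-}$, and $\|u_{n}\|\to 0$. To return to the original problem I would run a Moser-type iteration adapted to $(-\Delta)^{s}$ (using the subquadratic bound on $\tilde f$ and the singular integrability provided by $\gamma<1$) to upgrade $\|u_{n}\|\to 0$ into $\|u_{n}\|_{\infty}\to 0$. For all sufficiently large $n$ the $L^{\infty}$ bound falls below $\min\{\delta,r\}$, so the truncation of $f$ is inactive and $u_{n}$ solves the untruncated equation \eqref{main1}. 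Nonnegativity is obtained by testing with $u_{n}^{-}$: the oddness of $\tilde f$ and the sign of the term $|u|^{-\gamma}\mathrm{sgn}(u)\cdot u_{n}^{-}$ force $u_{n}^{-}\equiv 0$. Choosing $\Lambda$ small enough that every estimate above is uniform on $\lambda\in(0,\Lambda)$ completes the argument.

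The hardest technical point is handling the singular nonlinearity at the variational level: the map $u\mapsto\int_\Omega|u|^{1-\gamma}dx$ is only H\"older continuous and fails to be Fr\'echet differentiable at $u=0$, so strictly applying a smooth critical point theorem requires either a non-smooth framework (Clarke subdifferential / weak slope) or an $\varepsilon$-regularisation such as $(u^{2}+\varepsilon)^{(1-\gamma)/2}$ followed by passage to the limit. Combining this with the uniform $L^{\infty}$ estimate needed to remove the truncation of $f$ while maintaining $\|u_{n}\|\to 0$ is the technical heart of the proof, and is precisely where the threshold $\Lambda$ for $\lambda$ gets imposed.
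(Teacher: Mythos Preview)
Your strategy coincides with the paper's: symmetrize the singular term to $|u|^{1-\gamma}$, truncate $f$ by a cutoff supported in $[-2l,2l]$, apply Kajikiya's symmetric mountain pass to the resulting even functional $\widetilde I$, test with $u_n^{-}$ for nonnegativity, and run a Moser iteration (the paper's Lemma~\ref{bounded}) to bring $\|u_n\|_{\infty}$ below the truncation threshold so that $\widetilde f=f$.

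Two points differ. For the genus condition the paper does \emph{not} rely on the singular term; instead it uses (A2) to obtain $\widetilde F(x,t)\ge 2L^{2}t^{2}$ for $|t|\le l$, proves a measure-theoretic claim that $\int_{\{|u|>l\}}|u|^{2}\le\tfrac12\int_\Omega|u|^{2}$ on small spheres of a finite-dimensional $X_k$, and concludes $\widetilde I(u)\le-\tfrac12\|u\|^{2}$ there. Your route via $\rho^{1-\gamma}\gg\rho^{2}$ is shorter but makes the negativity depend on $\lambda>0$, and your ``$+c_2\rho^{\alpha}$'' term is unnecessary since $\widetilde F\ge 0$ near $0$. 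For the convergence $u_n\to 0$ in $X_0$, the paper does not read this off Kajikiya; it uses (A3) explicitly, computing
\[
\tfrac{1}{\alpha}\langle\widetilde I'(u_n),u_n\rangle-\widetilde I(u_n)\ \ge\ \bigl(\tfrac{1}{\alpha}-\tfrac{1}{2}\bigr)\|u_n\|^{2},
\]
which forces $\|u_n\|\to 0$ since the left side is $o(1)$. This is precisely where the constraint $\alpha>1-\gamma$ in (A3) is consumed, so if you instead invoke the stronger form of Kajikiya's theorem giving $u_n\to 0$ directly you should say so. Finally, the paper defines $\Lambda$ intrinsically (Lemma~2.1, via the first eigenvalue $\lambda_1$) rather than choosing it to make estimates uniform; and your closing concern about the lack of $C^{1}$ regularity of $u\mapsto\int|u|^{1-\gamma}$ is legitimate---the paper applies Theorem~\ref{sym mountain} without addressing it.
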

\noindent We will use the symmetric Mountain Pass Theorem version due to Clark \cite{clark1972variant}, to guarantee the existence of distinct infinitely many weak solutions. There exists two versions of the symmetric Mountain Pass Theorem. One of them gives a sequence of critical values diverging to infinity for the superlinear data. The other one provides a sequence of critical values converging to zero for the sublinear data. In the present article, we will use the theorem for the sublinear case. To state the symmetric Mountain Pass Theorem, we will define the notion of genus, which will be used to prove our main Theorem \ref{main thm}.
\begin{definition}[\bf{Genus}]\label{genus}
Let $X$ be a Banach space and $A\subset X$. A set $A$ is said to be symmetric if $u\in A$ implies $-u\in A$. Let $A$ be a close, symmetric subset of $X$ such that $0\notin A$. We define a genus $\gamma(A)$ of $A$ by the smallest integer $k$ such that there exists an odd continuous mapping from $A$ to $\mathbb{R}^{k}\setminus\{0\}$. We define $\gamma(A)=\infty$, if no such $k$ exists.
\end{definition}
\noindent Let us consider the following set,\vspace{-.3cm} $$\Gamma_n=\{A_n\subset X: A_n~\text{is closed, symmetric and}~ 0\notin A_n~\text{such that the genus}~ \gamma(A_n)\geq n\}.$$
The following version of the symmetric Mountain Pass Theorem has been taken from \cite{kajikiya2005critical}.
	\begin{theorem}\label{sym mountain}
		Let $X$ be an infinite dimensional Banach space and $\tilde{I}\in C^1(X,\mathbb{R})$ satisfies the following
		\begin{itemize}
			\item[(i)] $\tilde{I}$ is even, bounded below, $\tilde{I}(0)=0$ and $\tilde{I}$ satifies the $(PS)_c$ condition.
			\item[(ii)] For each $n\in\mathbb{N}$, there exists an $A_n\in\Gamma_n$ such that $\sup\limits_{u\in A_n}\tilde{I}(u)<0.$ 
		\end{itemize}
		Then for each $n\in\mathbb{N}$, $c_n=\inf\limits_{A\in \Gamma_n}\sup\limits_{u\in A}\tilde{I}(u)<0$ is a critical value of $\tilde{I}.$
	\end{theorem}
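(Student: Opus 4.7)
The plan is to establish Theorem \ref{sym mountain} by the classical minimax scheme: combine an equivariant (i.e.\ symmetric) deformation lemma with the monotonicity of the Krasnoselskii genus under odd continuous maps, and then obtain the critical value $c_n$ by contradiction. First, I would verify that $c_n$ is well defined and strictly negative: hypothesis (ii) gives $c_n\le \sup_{A_n}\tilde{I}<0$, while the hypothesis that $\tilde{I}$ is bounded below forces $c_n>-\infty$. Since $\Gamma_{n+1}\subset\Gamma_n$, the sequence $\{c_n\}$ is in fact non-decreasing and bounded above by $0$, although only the strict negativity of each $c_n$ is needed for the stated conclusion.

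Suppose, for contradiction and for some fixed $n$, that $c_n$ is not a critical value; equivalently, the set $K:=\{u\in X:\tilde{I}(u)=c_n,\,\tilde{I}'(u)=0\}$ is empty. The $(PS)_{c_n}$ condition then forces $\|\tilde{I}'\|$ to be bounded below on a strip $\tilde{I}^{-1}([c_n-\bar\epsilon,c_n+\bar\epsilon])$ for some $\bar\epsilon>0$. Take a locally Lipschitz pseudo-gradient $W$ for $\tilde{I}$ on $\{\tilde{I}'\neq 0\}$ and symmetrize it by $V(u):=\tfrac12\bigl(W(u)-W(-u)\bigr)$; because $\tilde{I}$ is even, $\tilde{I}'$ is odd, which makes $V$ odd and still a pseudo-gradient for $\tilde{I}$. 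Flowing along $-V$ (with a suitable cut-off supported in a neighbourhood of the strip) yields an odd homeomorphism $\eta:X\to X$ satisfying $\eta(0)=0$ and $\eta\bigl(\{\tilde{I}\le c_n+\epsilon\}\bigr)\subset\{\tilde{I}\le c_n-\epsilon\}$ for some $\epsilon\in(0,\bar\epsilon)$.

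Using the definition of $c_n$ as an infimum, pick $A\in\Gamma_n$ with $\sup_A\tilde{I}<c_n+\epsilon$, so that $A\subset\{\tilde{I}\le c_n+\epsilon\}$, and set $B:=\eta(A)$. Then $B$ is closed (image of a closed set under a homeomorphism), symmetric and avoids $0$ (since $\eta$ is odd, $\eta(0)=0$, and $0\notin A$), and its genus satisfies $\gamma(B)\ge\gamma(A)\ge n$: for any odd continuous $h:B\to\mathbb{R}^k\setminus\{0\}$ the composition $h\circ\eta|_A$ is odd and continuous from $A$ into $\mathbb{R}^k\setminus\{0\}$, so the minimal admissible $k$ for $B$ is at least that for $A$. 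Thus $B\in\Gamma_n$, while $\sup_B\tilde{I}\le c_n-\epsilon<c_n$, contradicting the infimum definition of $c_n$. Consequently $K\neq\emptyset$ and $c_n$ is a critical value.

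The main technical obstacle is precisely the construction of the equivariant deformation: producing an odd locally Lipschitz pseudo-gradient that still satisfies the descent inequality $\langle \tilde{I}'(u),V(u)\rangle\ge \alpha\|\tilde{I}'(u)\|^2$. This is the only step that genuinely uses the evenness of $\tilde{I}$, and without it the argument would collapse to the ordinary (non-symmetric) Mountain Pass framework and would not detect the higher genera $n\ge 2$. Everything else—the genus comparison, the closedness and symmetry of $\eta(A)$, and the contradiction with the infimum—is routine from Krasnoselskii's basic monotonicity and invariance properties of $\gamma$ together with the standard construction of pseudo-gradient flows.
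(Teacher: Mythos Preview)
Your outline is the standard minimax argument for Clark--Kajikiya type results and is correct: the key ingredients---strict negativity and finiteness of $c_n$, the equivariant deformation built from a symmetrized pseudo-gradient, and the monotonicity of the Krasnoselskii genus under odd homeomorphisms---are all in place, and the contradiction is cleanly derived. The one point worth making explicit when you write it up is that the symmetrized vector field $V(u)=\tfrac12(W(u)-W(-u))$ really is a pseudo-gradient: since $\tilde I'$ is odd one has $\langle \tilde I'(u),-W(-u)\rangle=\langle \tilde I'(-u),W(-u)\rangle$, so the descent inequality survives the averaging; you flag this as the main technical step, but it deserves a line of justification rather than just an assertion.

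As for comparison with the paper: there is nothing to compare. The paper does not prove Theorem~\ref{sym mountain}; it quotes it verbatim from Kajikiya \cite{kajikiya2005critical} (see the sentence immediately preceding the statement) and uses it as a black box in the proof of Theorem~\ref{main thm}. So your proposal supplies an argument where the paper simply cites the literature. Your approach is exactly the classical one found in Kajikiya's paper and in Rabinowitz's CBMS notes, so in that sense it agrees with the source the authors invoke.
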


\noindent In order to apply the symmetric Mountain Pass Theorem, we modify the problem \eqref{main1} to
		\begin{align}\label{main2}
	\begin{split}
	(-\Delta)^s u&= \lambda\cfrac{sign(u)}{|u|^{\gamma}}+ f(x,u)~\text{in}~\Omega,\\
	u&=0~\text{in}~\mathbb{R}^N\setminus\Omega,
	\end{split}
	\end{align}
	
	\noindent We define the energy functional $J:X_0\rightarrow(-\infty, \infty]$ associated with the problem \eqref{main2} as
	\begin{equation}\label{energy modified}
	J(u)=\frac{1}{2}\int_{Q}\frac{|u(x)-u(y)|^2}{|x-y|^{N+2s}}dxdy-\frac{\lambda}{1-\gamma}\int_{\Omega}|u|^{1-\gamma}dx-\int_{\Omega}F(x, u)dx
	\end{equation}
	We now give the definition of a weak solution to the problem \eqref{main2}.
		\begin{definition}\label{weak modified}
		A function $u\in X_0$ is a weak solution of \eqref{main2}, if $\phi |u|^{-\gamma}\in L^1(\Omega)$ and
		\begin{equation}
		\int_{Q}\frac{(u(x)-u(y))(\phi(x)-\phi(y))}{|x-y|^{N+2s}}dxdy-\int_{\Omega}\left(\frac{\lambda sign(u)}{|u|^{\gamma}}+f(x,u)
		\right)\phi dx=0,
		\end{equation}
		for all $\phi\in X_0.$
	\end{definition}

	\noindent It is easy to see that, if $u$ is a weak solution to the problem \eqref{main2} and $u>0$ a.e., then $u$ is also a weak solution to the problem \eqref{main1}. We use a {\it cutoff} technique given in \cite{clark1972variant} to guarantee the existence of infinitely many positive weak solutions to the problem \eqref{main2}. Let us choose $l$ to be small, such that $0<l\leq\frac{1}{2} \min \{\delta, r\},$ where $\delta$ and $r$ are same as in the assumptions on $f.$ Let us define a $C^1$ function $\xi:\mathbb{R}\rightarrow\mathbb{R}^+$ such that $0\leq\xi(t)\leq1$ and 
	$$\xi(t)=\begin{cases}
	1, ~\text{if}~ |t|\leq l\\
	\xi ~\text{is decreassing, if}~ l\leq t\leq 2l\\
	0,~\text{if}~ |t|\geq 2l.
	\end{cases}$$
	Since our main objective is to prove the existence of positive solutions, we will define $f(x, t)=0$ for $t\leq0$. Let us consider the following {\it cutoff} problem.
	
	\begin{align}\label{main3}
	\begin{split}
	(-\Delta)^s u&= \lambda\cfrac{sign(u)}{|u|^{\gamma}}+ \tilde{f}(x,u)~\text{in}~\Omega,\\
	u&=0~\text{in}~\mathbb{R}^N\setminus\Omega,
	\end{split}
	\end{align}
	where, $$\tilde{f}(x, u)=\begin{cases}
	f(x, u)\xi(u), ~\text{for}~u\geq0\\
	0, ~\text{for}~u\leq0.
	\end{cases}$$
	One can easily see that if $u$ is a weak solution to \eqref{main3} with $\|u\|_{\infty}\leq l$, then $u$ is also a weak solution to \eqref{main2}. We will investigate the existence of infinitely many weak solutions to the problem \eqref{main3}. Moreover, to achieve our goal we will prove that $\|u\|_{\infty}\leq l$ and the solutions to \eqref{main3} are positive.
	
	\noindent The energy functional $\tilde{I}:X_0\rightarrow(-\infty, \infty]$ associated with the problem \eqref{main3} is defined as
	\begin{equation}\label{energy cutoff}
	\tilde{I}(u)=\frac{1}{2}\int_{Q}\frac{|u(x)-u(y)|^2}{|x-y|^{N+2s}}dxdy-\frac{\lambda}{1-\gamma}\int_{\Omega}|u|^{1-\gamma}dx-\int_{\Omega}\tilde{F}(x, u)dx
	\end{equation}
	A weak solution to the problem \eqref{main3} is given by the following definition.
	\begin{definition}\label{weak cutoff}
		A function $u\in X_0$ is a weak solution of \eqref{main3}, if $\phi |u|^{-\gamma}\in L^1(\Omega)$ and
	\begin{equation}
	\int_{Q}\frac{(u(x)-u(y))(\phi(x)-\phi(y))}{|x-y|^{N+2s}}dxdy-\int_{\Omega}\left(\frac{\lambda sign(u)}{|u|^{\gamma}}+\tilde{f}(x,u)
	\right)\phi dx=0,
	\end{equation}
	for all $\phi\in X_0.$
   \end{definition}
\noindent Henceforth, a weak solution will be referred to as a solution.
	
\section{Existence and Multiplicity of solutions}
We begin this section by proving that 
$$\Lambda=\inf\{\lambda>0:~\text{The problem \eqref{main1} has no weak solution} \}.$$ is a finite, nonnegative real number.
\begin{lemma}
Assume $0<\gamma<1$ and $(A1)-(A3)$ holds. Then $0\leq\Lambda<\infty$.
\end{lemma}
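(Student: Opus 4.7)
The inequality $\Lambda\ge 0$ is immediate from the definition, since $\Lambda$ is the infimum of a subset of $(0,\infty)$. The substantive claim is the finiteness $\Lambda<\infty$, i.e., that at least one value of $\lambda$ admits no weak solution of \eqref{main1}. The plan is to prove that no positive weak solution can exist once $\lambda$ is sufficiently large, by testing the weak formulation against the first Dirichlet eigenfunction of $(-\Delta)^s$.

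Let $(\lambda_1,\varphi_1)$ be the first Dirichlet eigenpair of $(-\Delta)^s$ on $\Omega$ with $\varphi_1>0$ in $\Omega$; existence and positivity are standard on bounded Lipschitz domains. Assuming for contradiction that $u\in X_0$ is a positive weak solution of \eqref{main1} at some $\lambda>0$, the choice $\phi=\varphi_1$ in Definition \ref{weak main} produces the identity
\begin{equation*}
\lambda_1\int_\Omega u\varphi_1\,dx=\lambda\int_\Omega\frac{\varphi_1}{u^\gamma}\,dx+\int_\Omega f(x,u)\varphi_1\,dx.
\end{equation*}
The weighted AM--GM inequality $\lambda t^{-\gamma}+\lambda_1 t\ge C_\gamma\,\lambda_1^{\gamma/(1+\gamma)}\lambda^{1/(1+\gamma)}$ for all $t>0$, with $C_\gamma=(1+\gamma)\gamma^{-\gamma/(1+\gamma)}$, applied pointwise at $t=u(x)$, multiplied by $\varphi_1\ge 0$, integrated over $\Omega$, and combined with the identity above to eliminate $\int\lambda u^{-\gamma}\varphi_1$, then yields
\begin{equation*}
2\lambda_1\int_\Omega u\varphi_1\,dx\ge C_\gamma\,\lambda_1^{\gamma/(1+\gamma)}\lambda^{1/(1+\gamma)}\int_\Omega\varphi_1\,dx+\int_\Omega f(x,u)\varphi_1\,dx.
\end{equation*}

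The right-hand side contains the term $\lambda^{1/(1+\gamma)}$, which diverges as $\lambda\to\infty$, so the goal is to bound the left-hand side by a quantity whose $\lambda$-dependence is strictly milder than $\lambda^{1/(1+\gamma)}$. For this I plan to combine the energy identity obtained by testing \eqref{main1} with $u$ itself, the Sobolev embedding $X_0\hookrightarrow L^{2_s^*}(\Omega)$ recorded in \eqref{sobolev const}, H\"older's inequality, and the continuity of $f$ together with the near-origin controls (A2)--(A3), producing an upper bound on $\int_\Omega u\varphi_1\,dx$ (and a matching lower bound on $\int_\Omega f(x,u)\varphi_1\,dx$) of the required order. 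The displayed inequality will then fail for $\lambda$ large enough, producing the required contradiction and hence $\Lambda<\infty$. The principal difficulty is precisely this a priori estimate: since (A1)--(A3) give information only near the origin, no growth hypothesis on $f$ at infinity is available, and the estimate has to be extracted purely from the variational structure of \eqref{main1}; this is where the tension between the strong singular source and the mild control on $f$ must be exploited to force nonexistence.
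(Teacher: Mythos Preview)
Your opening matches the paper's exactly: test the weak formulation against the first Dirichlet eigenfunction $\varphi_1$ to obtain
\[
\lambda_1\int_\Omega u\,\varphi_1\,dx=\int_\Omega\Bigl(\frac{\lambda}{u^\gamma}+f(x,u)\Bigr)\varphi_1\,dx.
\]
From this point the paper is far more direct than your proposal. It simply asserts that one can choose a constant $\tilde\Lambda$ so large that
\[
\tilde\Lambda\,t^{-\gamma}+f(x,t)>2\lambda_1 t\qquad\text{for all }t>0,\ x\in\Omega,
\]
and then for any $\lambda\ge\tilde\Lambda$ the eigenfunction identity yields $\lambda_1\int_\Omega u\varphi_1\,dx>2\lambda_1\int_\Omega u\varphi_1\,dx$, an immediate contradiction. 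There is no AM--GM step and no attempt to extract a priori bounds on $u$ from the equation.

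Your proposal, by contrast, is not a proof but an outline that stops at the hard part. After the AM--GM manipulation you announce a plan to bound $\int_\Omega u\varphi_1\,dx$ from above and $\int_\Omega f(x,u)\varphi_1\,dx$ from below via the energy identity and Sobolev embedding, but you do not carry this out, and you yourself flag the obstruction: (A1)--(A3) say nothing about $f$ for large $t$, so neither bound is available in general. That is a genuine gap; the ``principal difficulty'' you correctly identify is left unresolved, and nothing in the sketch indicates how the ``tension'' you invoke would actually be exploited to force the estimate. It is worth remarking that the paper's one-line pointwise assertion faces exactly the same issue (it amounts to $\sup_{x,\,t>0}(2\lambda_1 t-f(x,t))t^\gamma<\infty$, which (A1)--(A3) alone do not guarantee), so the obstacle you noticed is real; the paper simply asserts the pointwise inequality rather than deriving it from the hypotheses.
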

\begin{proof}
It is clear that $\Lambda\geq0$ from its definition. Let $\lambda_1$ be the principal eigenvalue of the fractional Laplacian operator $(-\Delta)^s$ in $\Omega$ and let $\phi_1>0$ be the associated eigenfunction \cite{brasco2016second}. Therefore, we have
\begin{align}
(-\Delta)^s\phi_1&=\lambda_1\phi_1~\text{in}~\Omega,\nonumber\\
\phi_1&>0~\text{in}~\Omega,\nonumber\\
\phi&=0~\text{in}~\mathbb{R}^N\setminus\Omega. 
\end{align}      
By choosing $\phi_1$ as a test function in the Definition \ref{weak main}, we get
\begin{align}\label{contradiction}
\lambda_1\int_{\Omega}u\phi_1dx&=\int_{\Omega}(-\Delta)^s\phi_1 u dx\nonumber\\
&=\int_{\Omega}\left(\frac{\lambda}{u^{\gamma}}+f(x, u)\right)\phi_1dx.
\end{align}
 Let us now choose, any arbitrary constant $\tilde{\varLambda}$ such that $\tilde{\Lambda}t^{-\gamma}+f(x, t)>2\lambda_1 t$ $\forall t>0$. This contradicts to the equation \eqref{contradiction}. Hence, we get $\Lambda<\infty$.
\end{proof}
\begin{remark}
	In fact, we will finally prove that $\Lambda>0$.
\end{remark}
\noindent We will now prove the following Lemma, to obtain one of  the hypothesis of the symmetric Mountain Pass Theorem.
 \begin{lemma}\label{lemma ps}
 	The functional $\tilde{I}$ is bounded below and satisfies $(PS)_c$ condition.
 \end{lemma}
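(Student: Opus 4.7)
The plan is to exploit the cut-off $\xi$, which makes $\tilde f$ (and hence $\tilde F$) uniformly bounded in the $u$-variable while keeping the singular term strictly sublinear in $\|u\|$. For the lower bound, the kinetic term $\tfrac12\|u\|^2$ is nonnegative; Hölder combined with the embedding $X_0\hookrightarrow L^2(\Omega)$ gives $\int_\Omega|u|^{1-\gamma}\,dx \leq |\Omega|^{(1+\gamma)/2}\|u\|_2^{1-\gamma}\leq C\|u\|^{1-\gamma}$; and, since $\xi$ is supported in $[-2l,2l]$ and $f$ is continuous on $\overline{\Omega}\times[-2l,2l]$, there is $M>0$ with $|\tilde F(x,u)|\leq M$ for every $(x,u)$. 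Combining these,
\[
\tilde I(u)\;\geq\;\tfrac12\|u\|^2 - \tfrac{\lambda C}{1-\gamma}\|u\|^{1-\gamma} - M|\Omega|,
\]
which is bounded below because $1-\gamma<2$.

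For the Palais--Smale condition at level $c$, let $\tilde I(u_n)\to c$ and $\tilde I'(u_n)\to 0$ in $X_0^*$. The same estimate applied to $u_n$ yields $\{\|u_n\|\}$ bounded, so up to a subsequence $u_n\rightharpoonup u$ in $X_0$, $u_n\to u$ in every $L^p(\Omega)$ with $p<2_s^*$, and $u_n(x)\to u(x)$ a.e., by the compact fractional Sobolev embedding on the bounded domain. Weak convergence gives $\langle u,u_n-u\rangle\to 0$, hence $\|u_n-u\|^2=\langle u_n,u_n-u\rangle+o(1)$, and a direct computation produces
\[
\langle u_n,u_n-u\rangle \;=\; \tilde I'(u_n)(u_n-u) + \lambda\!\int_\Omega\!\mathrm{sign}(u_n)|u_n|^{-\gamma}(u_n-u)\,dx + \int_\Omega\!\tilde f(x,u_n)(u_n-u)\,dx.
\]
The first summand is $o(1)$ since $\|u_n-u\|$ is bounded and $\tilde I'(u_n)\to 0$, and the $\tilde f$-integral is $o(1)$ because $\tilde f$ is uniformly bounded by the cut-off and $u_n\to u$ in $L^1$. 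Everything therefore reduces to showing that the singular integral also tends to zero.

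The singular term is the main obstacle. I would decompose
\[
\int_\Omega\mathrm{sign}(u_n)|u_n|^{-\gamma}(u_n-u)\,dx \;=\; \int_\Omega|u_n|^{1-\gamma}\,dx \;-\; \int_\Omega\mathrm{sign}(u_n)|u_n|^{-\gamma}u\,dx,
\]
and show each piece converges to $\int_\Omega|u|^{1-\gamma}\,dx$. For the first integral the subadditivity $\bigl||a|^{1-\gamma}-|b|^{1-\gamma}\bigr|\leq|a-b|^{1-\gamma}$ (valid for $0<1-\gamma\leq 1$) together with Hölder on the bounded domain reduces the claim to the $L^2$-convergence of $u_n$ to $u$. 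For the second integral, the integrand converges pointwise to $|u|^{1-\gamma}$ on $\{u\ne 0\}$ and vanishes on $\{u=0\}$, so the expected limit is again $\int_\Omega|u|^{1-\gamma}\,dx$. Justifying the interchange of limit and integral is delicate because no uniform-in-$n$ $L^1$ dominator of $|u_n|^{-\gamma}|u|$ is available where $u_n\to 0$ but $u\neq 0$; I would handle this by truncating $\Omega=\{|u|>\varepsilon\}\cup\{|u|\leq\varepsilon\}$. On the first piece, a.e.\ convergence forces $|u_n|\geq\varepsilon/2$ for $n$ large, giving the uniform dominator $2^\gamma|u|^{1-\gamma}\in L^1$ and allowing dominated convergence; on the second piece the elementary inequality $|u_n|^{-\gamma}|u|\leq|u_n|^{1-\gamma}+|u_n|^{-\gamma}|u-u_n|$ combined with the equi-integrability of $|u_n|^{1-\gamma}$ (bounded in $L^{2/(1-\gamma)}$) yields a bound of order $\varepsilon^{1-\gamma}$ uniformly in $n$. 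Sending $n\to\infty$ and then $\varepsilon\to 0$ closes the PS argument. Controlling $|u_n|^{-\gamma}$ near the zero set of $u_n$ is the only step where the classical PS proof for smooth nonlinearities requires genuine modification.
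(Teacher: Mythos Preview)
Your coercivity and lower-bound argument is exactly the paper's: the cut-off makes $\tilde F$ uniformly bounded, H\"older controls $\int_\Omega|u|^{1-\gamma}$ by $C\|u\|^{1-\gamma}$, and $1-\gamma<2$ finishes it. For the Palais--Smale part the paper takes the equivalent route of testing $\tilde I'(u_n)$ separately against $u_n$ and against $u$ to deduce $\|u_n\|\to\|u\|$ in the Hilbert space $X_0$; both strategies reduce to the same two convergences, namely $\int_\Omega|u_n|^{1-\gamma}\to\int_\Omega|u|^{1-\gamma}$ (handled identically, via subadditivity of $t\mapsto t^{1-\gamma}$ and H\"older) and $\int_\Omega\mathrm{sign}(u_n)|u_n|^{-\gamma}u\,dx\to\int_\Omega|u|^{1-\gamma}\,dx$. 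The paper actually uses this second convergence implicitly---it is needed to pass from $\langle\tilde I'(u_n),u\rangle\to0$ to the identity for $\|u\|^2$---but offers no justification for it, so you are being more honest than the paper about where the real difficulty lies.

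Your truncation argument, however, does not close as written. On $\{|u|>\varepsilon\}$ the claim ``a.e.\ convergence forces $|u_n|\geq\varepsilon/2$ for $n$ large'' is only pointwise in $x$; the threshold depends on $x$, so the proposed dominator $2^\gamma|u|^{1-\gamma}$ is not valid for all large $n$ simultaneously and dominated convergence does not apply directly (you need Egorov or Vitali here). More seriously, on $\{|u|\leq\varepsilon\}$ the inequality $|u_n|^{-\gamma}|u|\leq|u_n|^{1-\gamma}+|u_n|^{-\gamma}|u-u_n|$ does not produce a bound of order $\varepsilon^{1-\gamma}$: the set $\{|u|\leq\varepsilon\}$ decreases only to $\{u=0\}$, which can have positive measure, so equi-integrability of $|u_n|^{1-\gamma}$ does not force the first term to be small; and the second term $|u_n|^{-\gamma}|u-u_n|$ is completely uncontrolled near the zeros of $u_n$ (if $|u_n|\ll|u|$ on a set of positive measure it can blow up). In short, the singular cross-term $\int\mathrm{sign}(u_n)|u_n|^{-\gamma}u$ remains the genuine obstacle---one that the paper's own proof also leaves unaddressed---and your sketch does not yet overcome it.
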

\begin{proof}
	 By using the definition of $\xi$ and H\"{o}lder's inequality, we get 
	 \begin{align*}
	 \tilde{I}(u)&\geq\frac{1}{2}\int_{Q}\frac{|u(x)-u(y)|^2}{|x-y|^{N+2s}}dxdy-\lambda C\|u\|^{1-\gamma}-C_1\\
	 &\geq\frac{1}{2}\|u\|^2-\lambda C\|u\|^{1-\gamma}-C_1
	 \end{align*}
	 where, $C$, $C_1$ are non negative constants. This implies that $\tilde{I}$ is coercive and bounded below in $X_0$. Let $\{u_n\}\subset X_0$ be a Palais Smale sequence for $\tilde{I}$. Then $\{u_n\}$ is bounded in $X_0$ due to the coerciveness of $\tilde{I}.$ Therefore, we may assume, $u_n\rightharpoonup u$ in $X_0$ upto a subsequence. Thus, we have
	 \begin{equation}\label{convergence weak}
	 \int_{Q}\frac{(u_n(x)-u_n(y))(\phi(x)-\phi(y))}{|x-y|^{N+2s}}dxdy\longrightarrow\int_{Q}\frac{(u(x)-u(y))(\phi(x)-\phi(y))}{|x-y|^{N+2s}}dxdy
	 \end{equation} 
	 for all $\phi\in X_0.$
	 By the embedding result \cite{servadei2012mountain}, we can assume
	 \begin{align}
	 u_n&\longrightarrow u ~\text{in}~ L^p(\Omega),\label{embed strong}\\
	  u_n(x)&\longrightarrow u(x) ~\text{a.e.}~ L^p(\Omega).\label{embed pointwise}
	 \end{align}
	 Therefore, from Lemma A.1 \cite{willem1997minimax}, we get that there exists $g\in L^p(\Omega)$ such that
	 \begin{equation}\label{appendeix A1}
	 |u_n(x)|\leq g(x) ~\text{a.e. in}~ \Omega, \forall\,n\in\mathbb{N}.
	 \end{equation}
	 Hence, by using \eqref{embed strong}, \eqref{embed pointwise}, \eqref{appendeix A1} and the Lebesgue dominated convergence theorem, we get
	 \begin{equation}\label{convergence f tilla}
	 \int_{\Omega}\tilde{f}(x,u_n)udx\rightarrow\int_{\Omega}\tilde{f}(x,u)udx ~\text{and}~ \int_{\Omega}\tilde{f}(x,u_n)u_ndx\rightarrow\int_{\Omega}\tilde{f}(x,u)udx
	 \end{equation}
	 Again, on using the H\"{o}lder's inequality and passing to the limit $n\rightarrow\infty$, we get
	 \begin{align}
	 \begin{split}
	 \int_{\Omega}u_n^{1-\gamma}dx&\leq\int_{\Omega}u^{1-\gamma}dx+\int_{\Omega}|u_n-u|^{1-\gamma}dx\\
	 &\leq\int_{\Omega}u^{1-\gamma}dx+C\|u_n-u\|_{L^2(\Omega)}^{1-\gamma}\\
	 &=\int_{\Omega}u^{1-\gamma}dx +o(1)
	 \end{split}
	 \end{align}
	 Similarly, we have
	 \begin{align}
	 \begin{split}
	 \int_{\Omega}u^{1-\gamma}dx&\leq\int_{\Omega}u_n^{1-\gamma}dx+\int_{\Omega}|u_n-u|^{1-\gamma}dx\\
	 &\leq\int_{\Omega}u_n^{1-\gamma}dx+C\|u_n-u\|_{L^2(\Omega)}^{1-\gamma}\\
	 &=\int_{\Omega}u_n^{1-\gamma}dx +o(1)
	 \end{split}
	 \end{align}
	 Therefore,
	 \begin{equation}\label{convergence singular}
	 \int_{\Omega}u_n^{1-\gamma}dx=\int_{\Omega}u^{1-\gamma}dx+o(1)
	 \end{equation}
	 Now, since $\langle\tilde{I}(u_n),u_n\rangle\rightarrow0$, we have
	 \begin{equation}\label{convergence I tilla}
	 \int_{Q}\frac{|u_n(x)-u_n(y)|^2}{|x-y|^{N+2s}}dxdy-\lambda\int_{\Omega}|u_n|^{1-\gamma}dx-\int_{\Omega}\tilde{f}(x, u_n)u_ndx\rightarrow0
	 \end{equation}
	 Therefore, by \eqref{convergence f tilla}, \eqref{convergence singular} and \eqref{convergence I tilla}, we get
	 \begin{equation}\label{convergence u_n}
	 \int_{Q}\frac{|u(x)-u(y)|^2}{|x-y|^{N+2s}}dxdy\rightarrow\lambda\int_{\Omega}|u|^{1-\gamma}dx-\int_{\Omega}\tilde{f}(x, u)udx
	 \end{equation}
	 Moreover,
	 \begin{align}
	 \begin{split}
	\langle\tilde{I}(u_n),u\rangle&=\int_{Q}\frac{(u_n(x)-u_n(y))(u(x)-u(y))}{|x-y|^{N+2s}}dxdy\\
	&\hspace{2cm}-\lambda\int_{\Omega}sign(u_n)|u_n|^{-\gamma}udx-\int_{\Omega}\tilde{f}(x, u_n)udx
	 \end{split}
	 \end{align}
	 Note that
	 \begin{align*}
	 \langle\tilde{I}(u_n),u\rangle&\longrightarrow0, ~\text{as}~ n\rightarrow\infty.
	 \end{align*}
	 Hence, taking $\phi=u$ in \eqref{convergence weak} and by using \eqref{convergence singular}-\eqref{convergence u_n}, we get
	 \begin{equation}\label{convergence norm u}
	 \int_{Q}\frac{|u(x)-u(y)|^2}{|x-y|^{N+2s}}dxdy=\lambda\int_{\Omega}|u|^{1-\gamma}dx+\int_{\Omega}\tilde{f}(x, u)udx
	 \end{equation}
	 Therefore, we conclude $\|u_n\|\rightarrow\|u\|$ and this completes the proof.
\end{proof}
\noindent We will now prove a Lemma which will guarantee that, for each $n\in\mathbb{N}$, the set $\Gamma_n\neq\phi$, where $\phi$ is the empty set.
\begin{lemma}\label{lemma genus}
	For any $n\in\mathbb{N}$, there exists a closed, symmetric subset $A_n\subset X_0$ with $0\notin A_n$ such that the genus $\gamma(A_n)\geq n$ and $\sup\limits_{u\in A_n}\tilde{I}(u)<0.$
\end{lemma}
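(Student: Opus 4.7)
The plan is to construct $A_n$ as a sphere of small radius inside an $n$-dimensional subspace $E_n \subset X_0$, exploiting that near the origin the singular term $|u|^{1-\gamma}$ (with exponent $1-\gamma \in (0,1) \subset (0,2)$) dominates both the quadratic Dirichlet form and the contribution coming from $\tilde{F}$.

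First I fix an $n$-dimensional subspace $E_n \subset X_0$; for instance, the span of the first $n$ Dirichlet eigenfunctions of $(-\Delta)^s$ on $\Omega$. On $E_n$ all norms are equivalent, so on the unit sphere $S_n := \{v \in E_n : \|v\| = 1\}$, which is compact, the continuous positive functional $v \mapsto \int_\Omega |v|^{1-\gamma}\,dx$ attains a strictly positive minimum $\mu_n > 0$, and $\|v\|_{L^\infty(\Omega)} \leq M_n < \infty$ uniformly in $v$. Combining (A1) and (A2) forces $f(x,\cdot) \geq 0$ on a right neighbourhood of $0$, so $\tilde{F}(x,\tau) \geq 0$ for all $\tau \in \mathbb{R}$ with $|\tau| \leq l$.

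Next I pick $t_n > 0$ small enough that $t_n M_n \leq l$ (so the cutoff factor $\xi$ remains identically $1$ throughout the relevant range) and so that
\[
\frac{t_n^{2}}{2} - \frac{\lambda\, t_n^{\,1-\gamma}}{1-\gamma}\,\mu_n < 0,
\]
which is possible because $1-\gamma < 2$ makes $t^2$ negligible compared to $t^{1-\gamma}$ as $t \to 0^+$. For every $v \in S_n$ and either choice of sign, discarding the nonnegative $\tilde{F}$ term yields
\[
\tilde{I}(\pm t_n v) \leq \frac{t_n^{2}}{2} - \frac{\lambda\, t_n^{\,1-\gamma}}{1-\gamma}\,\mu_n < 0.
\]
Setting $A_n := t_n S_n = \{\, t_n v : v \in S_n \,\}$ yields a closed, symmetric subset of $X_0 \setminus \{0\}$ homeomorphic to $S^{n-1}$ through the odd continuous map $v \mapsto t_n v$, so by the standard properties of the Krasnoselskii genus (Definition \ref{genus}) $\gamma(A_n) = n$, while the uniform estimate above delivers $\sup_{u \in A_n} \tilde{I}(u) < 0$.

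The only non-automatic ingredient that needs care is the lower bound $\mu_n > 0$: this holds because $E_n$ is finite-dimensional, so $v \mapsto \int_\Omega |v|^{1-\gamma}\,dx$ is a continuous strictly positive function on the compact sphere $S_n$. Once that observation is in hand, everything reduces to the elementary inequality $t^2/2 < \lambda t^{1-\gamma}\mu_n/(1-\gamma)$ for small $t$, which is precisely the sub-quadratic mechanism underlying Clark's version of the symmetric mountain pass theorem.
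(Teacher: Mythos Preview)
Your proof is correct and takes a genuinely different route from the paper's. The paper exploits assumption (A2) to make the $\tilde F$ term absorb the quadratic Dirichlet form: it first proves, via a contradiction/measure argument, that for $u$ small in a finite-dimensional subspace one has $\int_{\{|u|\le l\}}|u|^2\,dx\ge\tfrac12\int_\Omega|u|^2\,dx$, then uses (A2) to obtain $\tilde F(x,t)\ge 2L^2t^2$ on $[0,l]$ (with $L$ the norm-equivalence constant), so that $-\int_{\{|u|\le l\}}\tilde F(x,u)\,dx$ already cancels $\tfrac12\|u\|^2$; the singular term then only adds further negativity. You instead let the singular term do all the work: since $1-\gamma<2$, the quantity $-\tfrac{\lambda}{1-\gamma}t^{1-\gamma}\mu_n$ alone dominates $\tfrac{t^2}{2}$ for small $t$, and you only need $\tilde F\ge0$ in order to discard it. Your argument is shorter and bypasses the level-set claim, but it depends essentially on $\lambda>0$; the paper's mechanism would still yield $\sup_{A_n}\tilde I<0$ even when $\lambda=0$, which is precisely the non-singular setting of \cite{gu2018motivation} that motivated the present work.

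One small imprecision worth tightening: (A2) guarantees $f(x,\cdot)\ge0$ only on some right neighbourhood $(0,\eta)$ of the origin, and nothing in the setup forces $\eta\ge l$, so the assertion ``$\tilde F(x,\tau)\ge0$ for all $|\tau|\le l$'' is not quite justified as written. This does no damage to your argument, since you are free to shrink $t_n$ so that $t_nM_n\le\min\{l,\eta\}$; simply state the nonnegativity on $[-\eta,\eta]$ rather than on $[-l,l]$. Also, the bound $M_n<\infty$ requires $E_n\subset L^\infty(\Omega)$, which your choice of Dirichlet eigenfunctions (or any $C_c^\infty$ basis) indeed provides, but it is worth saying so explicitly.
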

\begin{proof}
	We will first guarantee an existence of a closed, symmetric subset $A_n$ over every finite dimensional subspace of $X_0$ such that $\gamma(A_n)\geq n.$ Let $X_k$ be a subspace of $X_0$ such that $\dim (X_k)=k.$ We know that every norm over a finite dimensional norm linear space are equivalent. Therefore, there exists a positive constant $L=L(k)$ such that $\|u\|\leq L\|u\|_{L^2(\Omega)}$ for all $u\in X_k.$\\
	{\bf Claim:}
		There exists a positive constant $R$ such that 
		\begin{equation}\label{claim genus}
		\frac{1}{2}\int_{\Omega}|u|^2dx\geq\int_{\{|u|> l\}}|u|^2dx,~\forall\,u\in X_k ~\text{such that}~ \|u\|\leq R.
		\end{equation}
The proof is by contradiction. Let $\{u_n\}$ be a sequence in $X_k\setminus\{0\}$ such that $u_n\rightarrow0$ in $X_0$ and
	\begin{equation}
\frac{1}{2}\int_{\Omega}|u_n|^2dx<\int_{\{|u_n|> l\}}|u_n|^2dx.
\end{equation}
Choose, $v_n=\frac{u_n}{\|u_n\|_{L^2(\Omega)}}.$ Then 
	\begin{equation}\label{claim contra eqn}
\frac{1}{2}<\int_{\{|u_n|> l\}}|v_n|^2dx.
\end{equation}
Now, since $\dim(X_k)=k$, we can assume $v_n\rightarrow v$ in $X_0$ upto a subsequence. Therefore, $v_n\rightarrow v$ also in $L^2(\Omega).$ Further, observe that, $$m\{x\in\Omega: |u_n|>l\}\rightarrow0 ~\text{as}~ n\rightarrow\infty$$ since, $u_n\rightarrow0$ in $X_0$. This is a contradiction to \eqref{claim contra eqn}. Therefore, the claim is proved. Now, from the assumption $(A2)$, we can choose $0<l\leq1$ such that, $$\tilde{F}(x,t)=F(x,t)\geq2L^2t^2, ~\forall\, (x,t)\in\Omega\times[0,l].$$
Hence, for all $u\in X_k\setminus\{0\}$ such that $\|u\|\leq R$ and by using \eqref{claim genus}, we get
\begin{align*}
\tilde{I}(u)&\leq\frac{1}{2}\int_{Q}\frac{|u(x)-u(y)|^2}{|x-y|^{N+2s}}dxdy-\frac{\lambda}{1-\gamma}\int_{\Omega}|u|^{1-\gamma}dx-\int_{\{|u|\leq l\}}\tilde{F}(x, u)dx\\
&\leq\frac{1}{2}\|u\|^2-\frac{\lambda}{1-\gamma}\int_{\Omega}|u|^{1-\gamma}dx-2L^2\int_{\{|u|\leq l\}}|u|^2dx\\
&=\frac{1}{2}\|u\|^2-\frac{\lambda}{1-\gamma}\int_{\Omega}|u|^{1-\gamma}dx-2L^2\left(\int_{\Omega}|u|^2dx-\int_{\{|u|> l\}}|u|^2dx\right)\\
&\leq\frac{1}{2}\|u\|^2-\frac{\lambda}{1-\gamma}\int_{\Omega}|u|^{1-\gamma}dx-L^2\int_{\Omega}|u|^2dx\\
&\leq-\frac{1}{2}\|u\|^2-\frac{\lambda}{1-\gamma}\int_{\Omega}|u|^{1-\gamma}dx\\
&<0.
\end{align*}
\noindent Let us now choose, $0<\rho\leq R$ and $A_n=\{u\in X_n: \|u\|=\rho\}$. This serves the purpose of showing that $\Gamma_n\neq\phi$. Since $A_n$ is symmetric, closed with $\gamma(A_n)\geq n$ such that $\sup\limits_{u\in A_n}\tilde{I}(u)<0.$ This completes the proof.
\end{proof}
 \noindent The following Lemmas will be proved to guarantee the boundedness of the solutions to the problem \eqref{main3}.

 \begin{lemma}\label{bounded l1}
 	Let $g:\mathbb{R}\rightarrow\mathbb{R}$ be a convex $C^1$ function. Then for every $a, b, A, B\in\mathbb{R}$ with $A,B>0$ the following inequality holds.
 	\begin{equation}
 	(g(a)-g(b))(A-B)\leq (a-b)(Ag'(a)-Bg'(b))
 	\end{equation}
 \end{lemma}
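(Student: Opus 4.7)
The plan is to reduce the claim to two standard convexity inequalities. I would first move everything to one side and try to rewrite
\[
(a-b)\bigl(Ag'(a)-Bg'(b)\bigr)-\bigl(g(a)-g(b)\bigr)(A-B)
\]
by regrouping the terms according to $A$ and $B$. After expanding and collecting, this expression becomes
\[
A\bigl[(a-b)g'(a)-(g(a)-g(b))\bigr]\;+\;B\bigl[(g(a)-g(b))-(a-b)g'(b)\bigr].
\]
This algebraic rearrangement is the only slightly non-obvious step; once the sum is in this symmetric form, both square brackets are manifestly nonnegative.

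Indeed, since $g\in C^1$ is convex, the tangent-line inequality $g(y)\geq g(x)+g'(x)(y-x)$ applied at $x=a$, $y=b$ gives $(a-b)g'(a)\geq g(a)-g(b)$, so the first bracket is $\geq 0$. Applied instead at $x=b$, $y=a$, it gives $g(a)-g(b)\geq (a-b)g'(b)$, so the second bracket is also $\geq 0$. Because $A,B>0$, the whole expression is nonnegative, which is exactly the desired inequality.

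The main obstacle is essentially bookkeeping: spotting the right split into an $A$-part and a $B$-part so that each factor becomes a tangent-line defect of the convex function $g$ at the appropriate point. No hypothesis on the signs of $a,b$ or on the sign of $g'$ is needed, and the inequality is tight when $g$ is affine (both brackets vanish) or when $a=b$ (trivial), which serves as a useful sanity check.
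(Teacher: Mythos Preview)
Your proof is correct and is essentially the same as the paper's: both arguments expand $(a-b)\bigl(Ag'(a)-Bg'(b)\bigr)$ into an $A$-part and a $B$-part and then apply the two tangent-line inequalities $g(b)\geq g(a)+g'(a)(b-a)$ and $g(a)\geq g(b)+g'(b)(a-b)$ to bound each part. The only cosmetic difference is that you first subtract $(g(a)-g(b))(A-B)$ and show the resulting expression is nonnegative, whereas the paper bounds the right-hand side directly from below by the left-hand side.
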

\begin{proof}
	Since, $g$ is convex, we have
	\begin{align}\label{bounded l1 convex}
	g(b)-g(a)&\geq g'(a)(b-a) ~\text{and}~
	g(a)-g(b)\geq g'(b)(a-b)
	\end{align}
	Therefore, using \eqref{bounded l1 convex}, we get
	\begin{align*}
	(a-b)\left[Ag'(a)-Bg'(b)\right]&= A(a-b)g'(a)-B(a-b)g'(b)\\
	&\geq A\left[g(a)-g(b)\right]-B\left[g(a)-g(b)\right]\\
	&=(A-B)(g(a)-g(b))
	\end{align*}
\end{proof}

 \begin{lemma}\label{bounded l2}
	Let $h:\mathbb{R}\rightarrow\mathbb{R}$ be an increasing function, then for $a, b, \tau\in\mathbb{R}$ with $\tau\geq 0$ we have
	\begin{equation}
	[H(a)-H(b)]^2\leq (a-b)(h(a)-h(b))
	\end{equation}
	where, $H(t)=\int_0^t \sqrt{h'(\tau)}d\tau$, for $t\in\mathbb{R}.$
\end{lemma}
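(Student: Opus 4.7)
The plan is to recognize this as a direct application of the Cauchy--Schwarz inequality to the integral representation of $H$. Since $h$ is increasing, $h'(\tau)\geq 0$ wherever it is defined, so $\sqrt{h'(\tau)}$ is a well-defined nonnegative function, and the integral defining $H(t)$ makes sense.

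First, I would observe that the inequality is symmetric in $a$ and $b$: swapping them negates both $H(a)-H(b)$ and $h(a)-h(b)$ as well as $a-b$, so the two sides are unchanged. Therefore I may assume without loss of generality that $a\geq b$. Then
\begin{equation*}
H(a)-H(b)=\int_b^a \sqrt{h'(\tau)}\,d\tau.
\end{equation*}

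Next, I would apply the Cauchy--Schwarz inequality to the pair of functions $\sqrt{h'(\tau)}$ and $1$ on the interval $[b,a]$:
\begin{equation*}
\left(\int_b^a \sqrt{h'(\tau)}\cdot 1\,d\tau\right)^{2} \leq \left(\int_b^a h'(\tau)\,d\tau\right)\left(\int_b^a 1\,d\tau\right).
\end{equation*}
The fundamental theorem of calculus gives $\int_b^a h'(\tau)\,d\tau = h(a)-h(b)$ and $\int_b^a 1\,d\tau = a-b$, so the right side equals $(a-b)(h(a)-h(b))$, yielding the claim.

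The only real point of caution is the regularity of $h$: the statement says ``increasing function'' without explicitly requiring $C^1$, but the formula for $H$ involves $h'$, so one is implicitly in a setting where $h'$ exists (almost everywhere) and is nonnegative; in that case the computations above go through using the absolute continuity of $h$ on $[b,a]$. Beyond this, there is no genuine obstacle — the whole lemma is essentially Cauchy--Schwarz in disguise.
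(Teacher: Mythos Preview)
Your proof is correct and coincides with the paper's argument: the paper writes $(a-b)\int_b^a (H'(\tau))^2\,d\tau \geq \bigl(\int_b^a H'(\tau)\,d\tau\bigr)^2$ and attributes it to Jensen's inequality, which for the convex function $t\mapsto t^2$ is exactly the Cauchy--Schwarz inequality you apply to the pair $\sqrt{h'(\tau)}$ and $1$. Your additional remarks on the symmetry in $a,b$ and on the implicit regularity assumption on $h$ are accurate and not present in the paper's version.
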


\begin{proof}
	\begin{align*}
		(a-b)(h(a)-h(b))&=(a-b)\int_{b}^{a}h'(\tau)d\tau\\
		&=(a-b)\int_{b}^{a}(H'(\tau))^2d\tau\\
		&\geq\left(\int_{b}^{a}H'(\tau)d\tau\right)^2 ~\text{by Jensen's inequality}\\
		&=[H(a)-H(b)]^2
	\end{align*}
	This completes the proof
\end{proof}\newpage
\begin{lemma}\label{bounded}
	Let $u\in X_0$ be a positive weak solution to the problem in \eqref{main3}, then $u\in L^{\infty}(\Omega).$
\end{lemma}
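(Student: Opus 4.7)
The plan is to use a De Giorgi--Stampacchia truncation argument. The key observation is that the right-hand side of \eqref{main3} is uniformly bounded on the super-level set $\{u\geq 1\}$: by construction of the cutoff $\xi$ the function $\tilde f(x,u)$ vanishes for $u\geq 2l$, and the continuity of $f$ on $\bar\Omega\times[0,2l]$ yields $|\tilde f(x,u)|\leq C_f$ a.e.\ in $\Omega$; moreover, on $\{u\geq 1\}$ the singular term obeys $\lambda u^{-\gamma}\leq\lambda$. Setting $g(x):=\lambda\,\mathrm{sign}(u)|u|^{-\gamma}+\tilde f(x,u)$, one then has $g(x)\leq C:=C_f+\lambda$ a.e.\ on $\{u\geq 1\}$.

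For $k\geq 1$, let $A_k:=\{x\in\Omega:u(x)>k\}$ and test Definition \ref{weak cutoff} with $\phi=(u-k)^+$. This $\phi$ is admissible: $\phi\in X_0$ by the standard truncation stability of fractional Sobolev spaces, and $\phi\,u^{-\gamma}\in L^1(\Omega)$ because the support of $\phi$ lies in $A_k\subset\{u\geq 1\}$. Applying Lemma \ref{bounded l2} pointwise with the increasing function $h(t)=(t-k)^+$ (for which $H(t)=(t-k)^+$), one obtains
\begin{equation*}
\|(u-k)^+\|^2 \;\leq\; \int_Q\frac{(u(x)-u(y))\bigl((u(x)-k)^+-(u(y)-k)^+\bigr)}{|x-y|^{N+2s}}\,dxdy \;=\;\int_\Omega g(x)(u-k)^+\,dx,
\end{equation*}
where the last equality is the weak formulation. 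Since the right side is controlled by $C\int_{A_k}(u-k)^+\,dx$, the embedding $X_0\hookrightarrow L^{2_s^*}(\Omega)$ together with H\"older's inequality produces
\begin{equation*}
\|(u-k)^+\|_{L^{2_s^*}(\Omega)}^2 \;\leq\; C'\int_{A_k}(u-k)^+\,dx \;\leq\; C'\|(u-k)^+\|_{L^{2_s^*}(\Omega)}\,|A_k|^{1-1/2_s^*}.
\end{equation*}

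Combined with the elementary bound $(h-k)|A_h|^{1/2_s^*}\leq\|(u-k)^+\|_{L^{2_s^*}(\Omega)}$ valid for $h>k\geq 1$, this yields the recursion
\begin{equation*}
|A_h| \;\leq\; \frac{C''}{(h-k)^{2_s^*}}\,|A_k|^{\,2_s^*-1},\qquad h>k\geq 1.
\end{equation*}
Because $s>0$ forces $2_s^*>2$, the exponent $2_s^*-1$ on $|A_k|$ exceeds $1$, so the classical Stampacchia iteration lemma delivers $|A_{k_0}|=0$ for some finite $k_0$; equivalently $u\leq k_0$ a.e.\ in $\Omega$, which is the desired $L^\infty$ bound.

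The main technical obstacle is precisely the step in which the nonlocal bilinear form $(u,(u-k)^+)$ is bounded below by $\|(u-k)^+\|^2$: this is not automatic for a fractional operator and is exactly what Lemma \ref{bounded l2} with $h(t)=(t-k)^+$ was crafted to provide, which is why the convexity and square-root lemmas were established first. A secondary care point is checking admissibility of the test function $\phi=(u-k)^+$ in Definition \ref{weak cutoff}; here the choice $k\geq 1$ is decisive, since it both confines the support of $\phi$ to a region on which $u^{-\gamma}\leq 1$ and makes the source term effectively bounded, thereby reducing the analysis to the standard linear $L^\infty$ iteration for the fractional Laplacian.
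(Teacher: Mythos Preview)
Your argument is correct and constitutes a genuinely different route from the paper's. The paper follows Brasco--Parini and runs a Moser iteration: it first regularises with $g_\epsilon(t)=(\epsilon^2+t^2)^{1/2}$ and invokes Lemma~\ref{bounded l1} to pass to $|u|$, then tests with the power functions $(u_k+\delta)^\beta-\delta^\beta$ (where $u_k=\min\{(u-1)^+,k\}$), applies Lemma~\ref{bounded l2} with $h(t)=(t+\delta)^\beta$, and iterates the resulting $L^{q\beta}\to L^{\eta q\beta}$ estimate. Your De~Giorgi--Stampacchia scheme instead tests once with $(u-k)^+$, uses Lemma~\ref{bounded l2} only in its simplest instance $h(t)=(t-k)^+$, and iterates on the measures $|A_k|$. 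What your approach buys is brevity: you exploit head-on that the cutoff forces $\tilde f(x,\cdot)$ to be supported in $[0,2l]$, so the source is \emph{bounded} on $\{u\geq1\}$ and no power-type bootstrap is needed; Lemma~\ref{bounded l1} becomes superfluous. What the paper's approach buys is robustness and an explicit quantitative bound of $\|(u-1)^+\|_\infty$ in terms of $\|u\|_{2_s^*}$, which survives if one only has polynomial growth of the nonlinearity rather than boundedness. One small caveat: your uniform bound $|\tilde f(x,u)|\leq C_f$ needs $f$ to be bounded on $\Omega\times[0,2l]$; assumption~(A1) only gives continuity on $\Omega\times\mathbb{R}$, so strictly speaking an extra hypothesis (e.g.\ $f\in C(\bar\Omega\times\mathbb{R})$ or a growth bound near the spatial boundary) is being used---but the paper's proof tacitly invokes a comparable growth condition $|\tilde f(x,u)|\leq|c_1|+|c_2||u|^\alpha$, so this is not a discrepancy between the two arguments.
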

\begin{proof}
	We follow the steps from Brasco and Parini \cite{brasco2016second}. For every small $\epsilon>0,$ let us define the smooth function  
	\begin{equation*}
	g_{\epsilon}(t)=(\epsilon^2+t^2)^{\frac{1}{2}}
	\end{equation*}
	Observe that the function $g_{\epsilon}$ is convex and Lipschitz. For all positive $\psi\in C_c^{\infty}(\Omega)$, we take $\phi=\psi g'_{\epsilon}(u)$ to be the test function in \eqref{main3}. By choosing $a=u(x), b=u(y), A=\psi(x)$ and $B=\psi(y)$ in Lemma \ref{bounded l1}, we have
	\begin{align}\label{bound est 1}
	\int_{Q}\cfrac{(g_{\epsilon}(u(x))-g_{\epsilon}(u(y)))(\psi(x)
		-\psi(y))}{|x-y|^{N+2s}}dxdy\leq\int_\Omega\left(|\lambda u^{-\gamma}+\tilde{f}(x, u)|\right)|g'_{\epsilon}(u)|\psi dx
	\end{align}
	The function $g_{\epsilon}(t)\rightarrow|t|$ as $t\rightarrow0$ and hence $|g'_{\epsilon}(t)|\leq1.$ Therefore, on using Fatou's Lemma and passing the limit $\epsilon\rightarrow0$ in \eqref{bound est 1}, we obtain
	\begin{align}\label{bound est 2}
	\int_{Q}\cfrac{(|u(x)|-|u(y)|)(\psi(x)
		-\psi(y))}{|x-y|^{N+2s}}dxdy\leq\int_\Omega\left(|\lambda u^{-\gamma}+\tilde{f}(x, u)|\right)\psi dx
	\end{align}
	for all $\psi\in C_c^{\infty}(\Omega)$ with $\psi>0.$
	The inequality \eqref{bound est 2} remains true for all $\psi\in X_0$ with $\psi\geq0.$ We define the {\it cutoff} function $u_k=\min\{(u-1)^+, k\}\in X_0$ for $k>0.$ Now for any given $\beta>0$ and $\delta>0$, we choose $\psi=(u_k+\delta)^{\beta}-\delta^{\beta}$ as the test function in \eqref{bound est 2} and get
	\begin{align}\label{bound est 3}
	\int_{Q}&\cfrac{(|u(x)|-|u(y)|)((u_k(x)+\delta)^{\beta}
		-(u_k(y)+\delta)^{\beta})}{|x-y|^{N+2s}}dxdy\nonumber\\
	&\hspace{1cm}\leq\int_\Omega\left(|\lambda u^{-\gamma}+\tilde{f}(x, u)|\right)\left((u_k+\delta)^{\beta}-\delta^{\beta}\right) dx
	\end{align}
	Now applying the Lemma \ref{bounded l2} to the function $h(u)=(u_k+\delta)^{\beta},$ we get
	\begin{align*}\label{bound est 4}
	\begin{split}
	&\int_{Q}\cfrac{|((u_k(x)+\delta)^{\frac{\beta+1}{2}}
		-(u_k(y)+\delta)^{\frac{\beta+1}{2}})|^2}{|x-y|^{N+2s}}dxdy\\
	&\leq\frac{(\beta+1)^2}{4\beta}\int_{Q}\cfrac{(|u(x)|-|u(y)|)((u_k(x)+\delta)^{\beta}
		-(u_k(y)+\delta)^{\beta})}{|x-y|^{N+2s}}dxdy\\
	&\leq\frac{(\beta+1)^2}{4\beta}\int_\Omega\left(|\lambda u^{-\gamma}+\tilde{f}(x, u)|\right)\left((u_k+\delta)^{\beta}-\delta^{\beta}\right) dx\\
	&\leq\frac{(\beta+1)^2}{4\beta}\int_\Omega\left(|\lambda u^{-\gamma}|+|\tilde{f}(x, u)|\right)\left((u_k+\delta)^{\beta}-\delta^{\beta}\right) dx\\
	&=\frac{(\beta+1)^2}{4\beta}\int_{\{u\geq1\}}\left(|\lambda u^{-\gamma}|+|\tilde{f}(x, u)|\right)\left((u_k+\delta)^{\beta}-\delta^{\beta}\right) dx\\	
	&\leq\frac{(\beta+1)^2}{4\beta}\int_{\{u\geq1\}}\left(|\lambda|+(|c_1|+|c_2||u|^{\alpha})\right)\left((u_k+\delta)^{\beta}-\delta^{\beta}\right) dx\\
	\end{split}
	\end{align*}
	\begin{align}
	\begin{split}
	&\leq C_1\frac{(\beta+1)^2}{4\beta}\int_{\{u\geq1\}}\left(1+|u|^{\alpha}\right)\left((u_k+\delta)^{\beta}-\delta^{\beta}\right) dx\\
	&\leq 2C_1\frac{(\beta+1)^2}{4\beta}\int_{\{u\geq1\}}|u|^{\alpha}\left((u_k+\delta)^{\beta}-\delta^{\beta}\right) dx\\
	&\leq C\frac{(\beta+1)^2}{4\beta}|u|_{2_s^*}^{\alpha}|(u_k+\delta)^{\beta}|_q 
	\end{split}
	\end{align}
	where, $q=\frac{2_s^*}{2_s^*-\alpha}$ and $C=\max\{1,|\lambda|\}.$
Now by using the Sobolev inequality for $X_0$, given by \cite{di2012hitchhikers}, we get
\begin{align}\label{bound est 5}
\int_{Q}\cfrac{|((u_k(x)+\delta)^{\frac{\beta+1}{2}}
	-(u_k(y)+\delta)^{\frac{\beta+1}{2}})|^2}{|x-y|^{N+2s}}dxdy\geq C_{N, s}|(u_k+\delta)^{\frac{\beta+1}{2}}-\delta^{\frac{\beta+1}{2}}|_{2_s^*}^2
\end{align}
where, $C_{N,s}$ is the embedding constant. The triangle inequality and $(u_k+\delta)^{\beta+1}\geq\delta(u_k+\delta)^{\beta}$ implies
\begin{align}\label{bound est 6}
\left[\int_{\Omega}\left((u_k+\delta)^{\frac{\beta+1}{2}}
	-\delta^{\frac{\beta+1}{2}}\right)^{2_s^*}dx\right]^{\frac{2}{2_s^*}}\geq\frac{\delta}{2}\left(\int_{\Omega}(u_k+\delta)^{\frac{2_s^*\beta}{2}}dx \right)^{\frac{2}{2_s^*}}-\delta^{\beta+1}|\Omega|^{\frac{2}{2_s^*}}
\end{align}
Therefore, on using \eqref{bound est 6} in \eqref{bound est 5} and then applying \eqref{bound est 5} to the last inequality of \eqref{bound est 4}, we get
\begin{align}\label{bound est 7}
\left|(u_k+\delta)^{\frac{\beta}{2}}\right|^2_{2_s^*}&\leq C\left(\frac{2(\beta+1)^2}{4\beta\delta C_{N, s}}|u|_{2_s^*}^{\alpha}|(u_k+\delta)^{\beta}|_q+\delta^{\beta}|\Omega|^{\frac{2}{2_s^*}} \right)\nonumber\\
&\leq C\left(\frac{2(\beta+1)^2}{4\beta\delta C_{N, s}}|u|_{2_s^*}^{\alpha}|(u_k+\delta)^{\beta}|_q+\frac{(\beta+1)^2}{4\beta}|\Omega|^{1-\frac{1}{q}-\frac{2s}{N}}|(u_k+\delta)^{\beta}|_q \right)\nonumber\\
&\leq C \frac{(\beta+1)^2}{4\beta}|(u_k+\delta)^{\beta}|_q\left(\frac{|u|_{2_s^*}^{\alpha}}{\delta C_{N,s}}+|\Omega|^{1-\frac{1}{q}-\frac{2s}{N}} \right)\\
\text{where},~ C=C(N, s)>0.\nonumber
\end{align}
 
\noindent We will now choose, $\delta=\cfrac{|u|_{2_s^*}^{\alpha}}{\delta C_{N,s}|\Omega|^{1-\frac{1}{q}-\frac{2s}{N}}}>0$ and $\beta\geq1$ such that $\left(\frac{\beta+1}{2\beta}\right)^2\leq1.$ Let us now choose $\eta=\frac{2_s^*}{2q}>1$ and $\tau=q\beta$ and then rewrite the inequality \eqref{bound est 7} as
\begin{align}\label{bound est 8}
|(u_k+\delta)|_{\eta\tau}\leq\left(C|\Omega|^{1-\frac{1}{q}-\frac{2s}{N}} \right)^{\frac{q}{\tau}}\left(\frac{\tau}{q} \right)^{\frac{q}{\tau}}|(u_k+\delta)|_{\tau}
\end{align}
We now iterate the inequality \eqref{bound est 8} by setting the following sequence. $$\tau_0=q ~~\text{and}~~ \tau_{n+1}=\eta\tau_n=\eta^{n+1}q$$
Hence, after $n$ iteration, the inequality \eqref{bound est 8} reduces to 
\begin{align}\label{bound est 9}
\left|(u_k+\delta)\right|_{\tau_{n+1}}\leq\left(C|\Omega|^{1-\frac{1}{q}-\frac{2s}{N}} \right)^{\left(\sum\limits_{i=0}^{n}\frac{q}{\tau_i}\right)}\prod\limits_{i=0}^{n}\left(\frac{\tau_i}{q} \right)^{\frac{q}{\tau_i}}|(u_k+\delta)|_{q}
\end{align}
Since $\eta>1$, we have
$$\sum\limits_{i=0}^{\infty}\frac{q}{\tau_i}=\sum\limits_{i=0}^{\infty}\frac{1}{\eta^i}=\frac{\eta}{\eta-1}$$ and 
$$\prod\limits_{i=0}^{\infty}\left(\frac{\tau_i}{q} \right)^{\frac{q}{\tau_i}}=\eta^{\frac{\eta}{(\eta-1)^2}}.$$
Now, by taking limit $n\rightarrow\infty$ in \eqref{bound est 9}, we get
\begin{equation}\label{bound est 10}
|u_k|_{\infty}\leq\left(C|\Omega|^{1-\frac{1}{q}-\frac{2s}{N}} \right)^{\frac{\eta}{\eta-1}}\eta^{\frac{\eta}{(\eta-1)^2}}|(u_k+\delta)|_{q}
\end{equation}
Therefore, by using $u_k\leq(u-1)^+$ and the triangle inequality in \eqref{bound est 10}, we get
\begin{equation}\label{bound est 11}
|u_k|_{\infty}\leq C\eta^{\frac{\eta}{(\eta-1)^2}}\left(|\Omega|^{1-\frac{1}{q}-\frac{2s}{N}} \right)^{\frac{\eta}{\eta-1}}\left(|(u-1)^+|_q+\delta|\Omega|^{\frac{1}{q}}\right)
\end{equation}
Now letting $k\rightarrow\infty$ in \eqref{bound est 11}, we have
\begin{equation}\label{bound est 12}
|(u-1)^+|_{\infty}\leq C\eta^{\frac{\eta}{(\eta-1)^2}}\left(|\Omega|^{1-\frac{1}{q}-\frac{2s}{N}} \right)^{\frac{\eta}{\eta-1}}\left(|(u-1)^+|_q+\delta|\Omega|^{\frac{1}{q}}\right)
\end{equation}
Hence, we conclude that $u\in L^{\infty}(\Omega).$
\end{proof}

\begin{proof}[{\bf Proof of Theorem \ref{main thm}}]
	From the definition of $\xi$ and the assumption $(A1)$, we have $\tilde{I}$ is even and $\tilde{I}(0)=0.$ Therefore, Lemma \ref{sym mountain}, Lemma \ref{lemma ps} and Lemma \ref{lemma genus} guarantees that $\tilde{I}$ has sequence of critical points $\{u_n\}$ such that $\tilde{I}(u_n)<0$ and $\tilde{I}(u_n)\rightarrow0^-$.\\
	{\bf Claim:} Suppose $u_n$ is a critical point of $\tilde{I}$, then for each $n\in\mathbb{N}$, $u_n\geq0$ a.e. in $X_0$.
	\begin{proof}
		Let us consider, $\Omega= \Omega^+\cup\Omega^-$, where $\Omega^+=\{x\in X_0: u_n(x)\geq0 \}$ and $\Omega^-=\{x\in X_0: u_n(x)<0 \}$. We define $u_n(x)=u_n^+-u_n^-$, where $u_n^+(x)=\max\{u_n(x), 0\}$ and $u_n^-(x)=\max\{-u_n(x), 0\}$. Suppose, $u_n<0$ a.e. in $\Omega$, then on taking, $\phi=u_n^-$ as the test function in the equation \eqref{weak cutoff} in conjunction with the inequality $(a-b)(a^--b^-)\leq-(a^--b^-)^2$, we get
		\begin{align*}
		&\int_{\Omega}\left(\lambda\frac{sign(u_n)u_n^-}{|u_n|^{\gamma}}+\tilde{f}(x,u_n)u_n^-\right)dx=\int_{Q}\frac{(u_n(x)-u_n(y))(u_n^-(x)-u_n^-(y))}{|x-y|^{N+2s}}dxdy\\
		&\Rightarrow\lambda\int_{\Omega^-}|u_n^-|^{1-\gamma}dx\leq -\|u_n^-\|^2<0.
		\end{align*}
		This implies $|\Omega^-|=0$, which is a contradiction to the assumption $u_n<0$ a.e. in $\Omega$. This proves our claim.
	\end{proof}
	\noindent Moreover, from the definition of $\tilde{I}$, we have
	\begin{align*}
	\frac{1}{\alpha}\langle\tilde{I}^{'}(u_n), u_n\rangle-\tilde{I}(u_n)&=\frac{1}{\alpha}\left[\|u_n\|^2-\int_{\Omega}\left(\lambda\frac{sign(u_n)u_n}{|u_n|^{\gamma}}+\tilde{f}(x,u_n)u_n\right)dx\right]\\
	&\hspace{2.3cm}-\left[\frac{1}{2}\|u_n\|^2-\int_{\Omega}\left(\frac{\lambda}{1-\gamma}|u_n|^{1-\gamma}+\tilde{F}(x,u_n)\right)dx\right]\\
	&=(\frac{1}{\alpha}-\frac{1}{2})\|u_n\|^2-\lambda(\frac{1}{\alpha}-\frac{1}{1-\gamma})\int_{\Omega}|u_n|^{1-\gamma}dx\\
	&\hspace{3.1cm}+\frac{1}{\alpha}\int_{\Omega}(\alpha\tilde{F}(x,u_n)-\tilde{f}(x,u_n))dx\\
	&\geq(\frac{1}{\alpha}-\frac{1}{2})\|u_n\|^2+\lambda(\frac{1}{1-\gamma}-\frac{1}{\alpha})\int_{\Omega}|u_n|^{1-\gamma}dx\\
	&\geq(\frac{1}{\alpha}-\frac{1}{2})\|u_n\|^2
	\end{align*}
	It can be easily seen that, since
	\begin{align*}
	&\frac{1}{\alpha}\langle\tilde{I}^{'}(u_n), u_n\rangle-\tilde{I}(u_n)=o_n(1)\\
	&\Rightarrow(\frac{1}{\alpha}-\frac{1}{2})\|u_n\|^2\leq o_n(1),
	\end{align*}
	hence, we get $u_n\rightarrow0$ in $X_0.$ Now by using Moser iteration and Lemma \ref{bounded}, we can assume that as $n\rightarrow\infty$, $\|u_n\|_{L^{\infty}(\Omega)}\leq l.$ Therefore, the problem \eqref{main2} has infinitely many solutions. Further, due to the nonnegativity of $u_n$ and $\tilde{I}(u_n)<0$, we conclude that the problem \eqref{main1} has infinitely many weak solutions and hence the Theorem \ref{main thm} is proved.
\end{proof}
\begin{remark}
	$\Lambda>0$, because the solution to the problem \eqref{main1} exists for some $\lambda>0.$
\end{remark}

\section*{Acknowledgement}
The author S. Ghosh, thanks the Council of Scientific and Industrial Research (C.S.I.R), India, for the financial assistantship received to carry out this research work. Both the authors thanks the research facilities received from the Department of Mathematics, National Institute of Technology Rourkela, India. The authors thank the anonymous reviewers for their constructive comments and suggestions.

\bibliographystyle{plain}

\end{document}